\numberwithin{equation}{section}
\theoremstyle{plain}
\newtheorem{theorem}{Theorem}[section]
\theoremstyle{remark}
\newtheorem{remark}[theorem]{Remark}
\newtheorem*{ack}{Acknowledgement}
\theoremstyle{definition}
\newcommand{\QQ}{\mathcal{Q}}
\newcommand{\Z}{\mathbb{Z}}
\newcommand{\N}{\mathbb{N}}
\newcommand{\eps}{\varepsilon}
\DeclareMathOperator{\dist}{dist}
\DeclareMathOperator{\inter}{int}
\DeclareMathOperator{\spt}{spt}
\DeclareMathOperator{\udimloc}{\overline{dim}_{loc}}
\DeclareMathOperator{\ldimloc}{\underline{dim}_{loc}}
\begin{document}

\title[Existence of doubling measures]{Existence of doubling measures
  via generalised nested cubes}

\author{Antti K\"aenm\"aki}
\address{Department of Mathematics and Statistics \\
         P.O. Box 35 (MaD) \\
         FI-40014 University of Jyv\"askyl\"a \\
         Finland}
\email{antti.kaenmaki@jyu.fi}

\author{Tapio Rajala}
\address{Scuola Normale Superiore \\
         Piazza dei Cavalieri 7 \\
         I56127 Pisa \\
         Italy}
\email{tapio.rajala@sns.it}

\author{Ville Suomala}
\address{Department of Mathematics and Statistics \\
         P.O. Box 35 (MaD) \\
         FI-40014 University of Jyv\"askyl\"a \\
         Finland}
\email{ville.suomala@jyu.fi}

\thanks{V.\ Suomala acknowledges the support of the Academy of Finland, project \#126976}
\subjclass[2000]{Primary 28C15; Secondary 54E50.}
\keywords{doubling measure, nested cubes in metric spaces}

\begin{abstract}
Working on doubling metric spaces, we construct generalised dyadic cubes adapting ultrametric structure. If the space is complete, then the existence of such cubes and the mass distribution principle lead into a simple proof for the existence of doubling measures. As an application, we show that for each $\eps>0$ there is a doubling measure having full measure on a set of packing dimension at most $\eps$.
\end{abstract}

\maketitle

\section{Introduction and notation}

A measure $\mu$ on a metric space $(X,d)$ is called \emph{doubling} if
there is a constant $1 \le D < \infty$ such that
\[
  0 < \mu\bigl( B(x,2r) \bigr) \leq D\mu\bigl( B(x,r) \bigr) < \infty
\]
for all $x \in X$ and $r>0$. Here $B(x,r) = \{ y \in X : d(x,y) \le r
\}$ is a closed ball with centre $x$ and radius $r$. We
denote open balls by $U(x,r)$. By a measure we always mean a Borel regular
outer measure. A metric space $X$ has the \emph{finite doubling property} if any
ball $B(x,2r)\subset X$ may be covered by finitely many balls of
radius $r$. Furthermore, such a space is \emph{doubling} if the number
of the $r$-balls needed to cover $B(x,2r)$ has an upper bound
$N\in\mathbb{N}$ independent of $x$ and $r$.

Let $\mathcal{D}(X)$ be the collection of all
doubling measures on $X$.
It is clear that if $\mathcal{D}(X)\ne\emptyset$, then $X$ is
doubling. The reverse implication is true if $X$ is assumed to be
complete.
For compact
doubling metric spaces this result was first proved by
Vol'berg and Konyagin \cite{VolbergKonyagin1984,VolbergKonyagin1987}. Luukkainen and
Saksman \cite{LuukkainenSaksman1998} generalised it to the complete
case. A slightly simpler proof in the compact case can be found in Wu
\cite{Wu1998} (see also Heinonen \cite{Heinonen2001}). Saksman
\cite{Saksman1999} has constructed examples of domains
$\Omega$ with $\mathcal{D}(\Omega)=\emptyset$
and the results of Cs\"ornyei and Suomala \cite{CsornyeiSuomala2010}
may be used to study whether $\mathcal{D}(X)\neq\emptyset$ for certain
countable sets $X\subset\mathbb{R}$.

In Theorem \ref{lemma:dyadic}, we construct nested families of ``cubes'' sharing most of the good properties of dyadic (or $r$-adic) cubes of Euclidean spaces. The existence of similar kind of nested partitions of metric spaces has been studied in many works (see e.g.\ \cite{Larman1967, Rogers1970, Christ1990, HytonenMartikainen2009}). Besides that our construction adapts ultrametric structure, we believe, given the length of the construction, it also fulfills the desire of the existence of a simple and effective construction of such cubes (see \cite[\S 7.2]{Rogers1970}).

As an application, we show in Theorem \ref{thm:main} that complete doubling metric spaces carry doubling measures. This is of course a known result (see \cite{VolbergKonyagin1987, Wu1998, LuukkainenSaksman1998}). Since our proof uses nested partitions and the mass distribution principle, we are able to get the result directly also in the unbounded case; the extra step in the unbounded case (see \cite{LuukkainenSaksman1998}) is not needed. Also, once the partitions have been fixed, our method can be used to construct doubling measures possessing certain measure theoretical self-similarity; see Remark \ref{rem}(3). In fact, in Theorem \ref{thm:wu} we show that for each $\eps>0$ there is a doubling measure having full measure on a set of packing dimension at most $\eps$. This result was known before only for the Hausdorff dimension (see \cite{Wu1998}).

In what follows, the (metric) closure, interior and boundary of a set
$A\subset X$ are denoted by
$\overline{A}$, $\inter(A)$ and $\partial A$, respectively.

\section{Construction of generalised dyadic cubes}

\begin{theorem}\label{lemma:dyadic}
  If $X$ is a metric space with the finite doubling property and $0<r<\frac13$, then there exists a collection $\{ Q_{k,i} : k \in \mathbb{Z},\; i \in N_k
  \subset \N \}$ of Borel sets having the following properties:
  \begin{enumerate}[(i)]
    \item\label{i} $X = \bigcup_{i\in N_k}Q_{k,i}$ for every $k \in \mathbb{Z}$,
    \item\label{ii} $Q_{k,i} \cap Q_{m,j} = \emptyset$ or $Q_{k,i}
      \subset Q_{m,j}$ when $k,m\in\mathbb{Z}$, $k\ge m$, $i \in N_k$ and $j \in N_m$,
    \item\label{iii} for every $k\in \mathbb{Z}$ and $i \in N_k$ there exists a point $x_{k,i} \in X$ so that
    \[
      U(x_{k,i},c r^k) \subset Q_{k,i} \subset B(x_{k,i},Cr^k)
    \]
where $c=\tfrac12-\tfrac{r}{1-r}$ and $C=\tfrac{1}{1-r}$,
     \item\label{iv} there exists a point $x_0 \in X$ so that for
       every $k \in \mathbb{Z}$ there is $i \in N_k$ so that
     \[
       U(x_0 ,cr^k) \subset Q_{k,i},
     \]
\item\label{v} $\{x_{k,i}\,:\,i\in N_k\}\subset\{x_{k+1,i}\,:\,i\in
  N_{k+1}\}$ for all $k\in\Z$.
  \end{enumerate}
\end{theorem}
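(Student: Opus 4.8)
The plan is to build a hierarchy of ``centres'', declare the cubes to be the descendant classes of this hierarchy (completed by taking closures), and then read off the five properties from the separation and covering features of the hierarchy together with the geometric series $\sum_{j\ge k} r^{j}=r^{k}/(1-r)$.

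\emph{Step 1 (the hierarchy of centres).} Fix $x_0\in X$. The first task is to produce, for every $k\in\Z$, a \emph{maximal} $r^{k}$-separated set $\Xi_k\subset X$ (its points pairwise $\ge r^{k}$ apart, and every $y\in X$ within $r^{k}$ of $\Xi_k$) subject to $x_0\in\Xi_k$ and $\Xi_k\subseteq\Xi_{k+1}$. By the finite doubling property each $\Xi_k$ meets every ball in finitely many points, so it is countable and can be written $\{x_{k,i}:i\in N_k\}$ with $N_k\subseteq\N$ and (say) $x_{0,1}=x_0$. Refining to a finer scale is routine (a maximal $r^{k}$-separated set is $r^{k+1}$-separated, hence extends by Zorn's lemma), but passing to a \emph{coarser} scale is not: a maximal $r^{k-1}$-separated subset of $\Xi_k$ need not be maximal in $X$, so one cannot just thin $\Xi_k$ out. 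The families must therefore be produced simultaneously over all scales, with no coarsest scale available to anchor an induction in the unbounded case; I would do this by transfinite recursion along a well-ordering of $X$ with $x_0$ first, assigning to each point the coarsest scale at which it may be inserted without destroying $r^{k}$-separation against the already-placed centres, and then checking the resulting level sets are genuinely maximal. \emph{This step -- making the hierarchy coherent across the doubly infinite range of scales so that the stated constants $c$ and $C$ come out -- is the main obstacle}; what follows is bookkeeping.

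\emph{Step 2 (cubes as descendant classes).} For each $k$ let $\roo_k\colon\Xi_{k+1}\to\Xi_k$ send $y$ to itself when $y\in\Xi_k$ and otherwise to the point of $\Xi_k\cap B(y,r^{k})$ closest to $y$, breaking ties by the index; maximality of $\Xi_k$ makes this well defined with $d(y,\roo_k(y))<r^{k}$. Composing yields ancestor maps $\roo_{k,m}=\roo_k\circ\dots\circ\roo_{m-1}$ for $m>k$, and by nestedness every point of the dense set $Y:=\bigcup_m\Xi_m$ (dense because $r^{m}\to0$) acquires a coherent ancestor $a_k(y)\in\Xi_k$ at every scale, with $a_{k-1}=\roo_{k-1}\circ a_k$. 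Put $Q^{0}_{k,i}:=\{y\in Y:a_k(y)=x_{k,i}\}$ and let $Q_{k,i}$ be obtained from $\overline{Q^{0}_{k,i}}$ by assigning each point of $X\setminus Y$ to one cube at each scale according to a coherent choice of ancestor along an approximating sequence (a diagonal argument over scales), so that the $Q_{k,i}$ are Borel and, for fixed $k$, partition $X$.

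\emph{Step 3 (verification).} Properties (i) and (ii) hold because the classes $\{Q^{0}_{k,i}\}_i$ partition $Y$ and nest correctly across scales (via $a_k=\roo_{k,m}\circ a_m$), and this is inherited by the $Q_{k,i}$; property (v) and the inclusion $\{x_{k,i}\}_i\subseteq\{x_{k+1,i}\}_i$ are immediate from $\Xi_k\subseteq\Xi_{k+1}$. For the outer bound in (iii), a descendant $y\in\Xi_m$ of $x_{k,i}$ satisfies $d(y,x_{k,i})<\sum_{j=k}^{m-1}r^{j}<\tfrac{r^{k}}{1-r}=Cr^{k}$, since each step of its ancestor chain moves it by less than $r^{j}$, and this survives taking closures because $B(x_{k,i},Cr^{k})$ is closed. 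For the inner bound, take $y$ with $d(y,x_{k,i})<cr^{k}$ and approximate it by $y'\in\Xi_m\cap Y$; since $d(y',a_{k+1}(y'))<\sum_{j=k+1}^{m-1}r^{j}<\tfrac{r^{k+1}}{1-r}$, we get $d(a_{k+1}(y'),x_{k,i})<cr^{k}+\tfrac{r^{k+1}}{1-r}=\tfrac12 r^{k}$ -- this identity is exactly where $c=\tfrac12-\tfrac{r}{1-r}$ (and the hypothesis $r<\tfrac13$, which makes $c>0$) is forced -- so $x_{k,i}$ is the \emph{strictly} nearest point of $\Xi_k$ to $a_{k+1}(y')$, hence $\roo_k(a_{k+1}(y'))=x_{k,i}$ and $y'\in Q^{0}_{k,i}$. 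As $Y$ is dense and, by the same estimate, $U(x_{k,i},cr^{k})$ meets no other $\overline{Q^{0}_{k,j}}$, we conclude $U(x_{k,i},cr^{k})\subseteq Q_{k,i}$. Finally (iv) is free: $x_0\in\Xi_k$ for every $k$, so $x_0=x_{k,i}$ for some $i$ and (iii) gives $U(x_0,cr^{k})\subseteq Q_{k,i}$.
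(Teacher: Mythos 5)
The step you yourself flag as ``the main obstacle'' is exactly where your proposal has a genuine gap, and the gap is largely self\nobreakdash-inflicted. You insist that every $\Xi_k$, $k\in\Z$, be maximal $r^k$-separated \emph{in all of $X$}, correctly observe that thinning $\Xi_{k+1}$ does not give this at coarse scales, and then propose a transfinite recursion that you do not carry out. But look at where maximality is actually used in your Steps 2--3: you only need (a) every point of $\Xi_{k+1}$ to have a point of $\Xi_k$ within $r^k$ (to define $\roo_k$ and sum the geometric series), and (b) density of $Y=\bigcup_m\Xi_m$, which comes from the fine scales $k\ge 0$ alone. For (a) at a scale $k<0$ it suffices that $\Xi_k$ be a maximal $r^k$-separated subset \emph{of $\Xi_{k+1}$} containing $x_0$ --- precisely the thinning you reject; maximality in $X$ at coarse scales is never needed, since property (i) at those levels follows from the coarse cubes being unions of descendant classes whose closures already cover $X$. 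This is how the paper proceeds: extend upward for $k\ge0$, thin downward for $k<0$. Moreover, the recursion you sketch is problematic as stated: if a point inserted at a coarse scale $k$ is tested only against the current $\Xi_k$, it may lie within $r^j$ of an earlier centre born at a finer scale $j>k$, destroying the $r^j$-separation of $\Xi_j$; if instead you test against all scales $\ge k$, points can be blocked at scale $k$ purely by fine-scale conflicts and the final $\Xi_k$ need not be maximal in $X$ --- and you offer no argument for the ``checking the resulting level sets are genuinely maximal'' step. So the central construction is missing, in pursuit of a property the theorem does not require.

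A secondary gap is the passage from the descendant classes $Q^0_{k,i}$ to Borel sets that partition $X$ at each scale and nest across scales: ``a coherent choice of ancestor along an approximating sequence (a diagonal argument over scales)'' is not a construction --- arbitrary per-point assignments of closure points need not yield Borel sets, nor automatically respect (ii). The paper settles this with explicit formulas: at level $0$, $Q_{0,i}=\overline{\{x_{l,j}:(l,j)\prec(0,i)\}}\setminus\bigcup_{j<i}Q_{0,j}$; for $k>0$ one intersects with the parent cube and subtracts earlier siblings; for $k<0$ one takes unions of children. These give Borel measurability, disjointness, covering and nesting simultaneously. Your geometric estimates in Step 3 (the parent-within-$r^k$ chains, the constants $c=\tfrac12-\tfrac{r}{1-r}$ and $C=\tfrac1{1-r}$, and properties (iv)--(v)) do coincide with the paper's argument and are fine once the hierarchy and the disjointification are actually in place.
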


\begin{proof}
  Fix a point $x_0 \in X$ and start by choosing a
  maximal collection of points $\{x_{0,i}\,:\,i\in N_0\}\subset X$
  containing $x_0$ and having the property that
  $d(x_{0,i},x_{0,j})\ge 1$ if $i\neq j$. Next, for each $k\in\mathbb{N}$, let
  $\{x_{k,i}\,:\,i\in N_k\}\supset\{x_{k-1,i}\,:\,i\in  N_{k-1}\}$ be
  a maximal collection of points having mutual distances at least
  $r^k$. If
  $k\in\mathbb{Z}$, $k<0$, we let
  $\{x_{k,i}\,:\,i\in N_{k}\}$ be any maximal subcollection of
  $\{x_{k+1,i}\,:\,i\in N_{k+1}\}$ containing $x_0$ whose points have mutual distances
  at least $r^{k-1}$.

  In the set of all possible pairs $(k,i)$, $k\in \mathbb{Z}$, $i\in
  N_k$, consider the smallest  partial order $\prec$ that satisfies the following
  property: For each $k \in \mathbb{Z}$ and $i \in N_{k+1}$, we have
  $(k+1,i) \prec (k,j)$ if
  \[
    j = \min\bigl\{ h \in N_k : \dist(x_{k+1,i},x_{k,h}) = \min_{l\in N_k}\dist(x_{k+1,i},x_{k,l}) \bigr\}.
  \]
  Notice that $\min_{l\in N_k}\dist(x_{k+1,i},x_{k,l})$ exists,
  because $X$ has the finite doubling property, and that $j$ also
  exists and is unique. The sets $Q_{k,i}$ will be defined by using
  this partial order.

  We first define the sets $Q_{0,i}$ for $i\in N_0$ as
  \[
    Q_{0,i} = \overline{\{x_{l,j}\,:\,(l,j)\prec(0,i)\}} \setminus \bigcup_{j<i}Q_{0,j}.
  \]
  For $k<0$ we define the sets $Q_{k,i}$ inductively as
  \[
    Q_{k,i}=\bigcup_{(k+1,j)\prec(k,i)}Q_{k+1,j}
  \]
  whereas for $k>0$, we let
  \[
    Q_{k,i} = Q_{k-1,j} \cap \overline{\{x_{l,j}\,:\,(l,j)\prec(k,i)\}} \setminus \bigcup_{j<i}Q_{k,j},
  \]
  where $(k,i)\prec(k-1,j)$. The defined sets are clearly Borel.

  Let us check that the sets $Q_{k,i}$ satisfy the conditions
  \eqref{i}--\eqref{v}. To see \eqref{i} it is enough to notice that
  \[
    \bigcup_{i\in N_k}Q_{k,i} = \bigcup_{i\in N_k} \overline{\{x_{l,j}\,:\,(l,j)\prec(k,i)\}},
  \]
  which is dense and closed in $X$. Conditions \eqref{ii} and \eqref{v} follow
  immediately from the construction. Let us next verify
  \eqref{iii}.
The fact that $\{x_{m,i}\,:\,i\in N_m\}$ is a maximal $r^m$-separated subset of $\{x_{m+1,j}\,:\, j\in N_{m+1}\}$ together with the definition of $\prec$ implies that $d(x_{m,i},x_{m+1,j})\leq r^{m}$ if $(m+1,j)\prec(m,i)$. This gives
  \[
    Q_{k,i} \subset B\bigl( x_{k,i}, \sum_{m=k}^\infty r^m \bigr) = B\left(x_{k,i},\tfrac{1}{1-r}r^k\right).
  \]
  On the other hand, if $n>k$ and $(n,j)\not\prec(k,i)$, then
  $d(x_{k,i},x_{n,j})\ge\tfrac12r^k-\sum_{m=k+1}^\infty
  r^m=(\tfrac12-\tfrac{r}{1-r})r^k$. This implies
\[U\left(x_{k,i},(\tfrac12-\tfrac{r}{1-r})r^k\right)
  \subset Q_{k,i}\]
 and finishes the proof of \eqref{iii}. Finally, the claim \eqref{iv} follows from \eqref{iii} since
 $x_0\in\{x_{k,i}\,:\,i\in N_k\}$ for all $k\in\mathbb{Z}$.
\end{proof}

\begin{remark}
The statement of Theorem \ref{lemma:dyadic} remains true also for any $\tfrac13\le r<1$ with $\tfrac12-\tfrac{r}{1-r}$ and $\tfrac{1}{1-r}$ replaced by some constants $0<c<C<\infty$ depending only on $r$. This can be easily seen as follows: First apply Theorem \ref{lemma:dyadic} with $r=\tfrac14$ to obtain the families $\{Q_{k,i} : i\in N_k\}$. Then, for any $\tfrac13\le\tilde{r}<1$ and $n\in\N$, we choose $k=k(n,\tilde{r})\in\N$ such that $4^{-k}<\tilde{r}^n\le 4^{-k+1}$. Now $\{\widetilde{Q}_{n,i}\}=\{Q_{k(n,\tilde{r}),i}\,:\,i\in N_{k(n,\tilde{r})}\}$ are the desired families. We formulated the result as in Theorem \ref{lemma:dyadic} since the explicit expressions for the constants $c$ and $C$ for small $r$ are needed in the proof of Theorem \ref{thm:main} below.
\end{remark}

\section{Existence of doubling measures}

\begin{theorem}\label{thm:main}
  If $X \ne \emptyset$ is a complete doubling metric space, then $\mathcal{D}(X) \ne \emptyset$.
\end{theorem}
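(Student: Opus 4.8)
The plan is to build a doubling measure directly on the generalised cubes provided by Theorem~\ref{lemma:dyadic}. Fix $r=\tfrac{1}{10}$ (any small $r$ works; we just need $c=\tfrac12-\tfrac{r}{1-r}>0$ and control of $C=\tfrac{1}{1-r}$), and let $\{Q_{k,i}:k\in\Z,\ i\in N_k\}$ be the associated nested family, with centres $x_{k,i}$ satisfying $U(x_{k,i},cr^k)\subset Q_{k,i}\subset B(x_{k,i},Cr^k)$. Since $X$ is doubling, there is $N\in\N$ such that every ball $B(x,2\roo)$ is covered by $N$ balls of radius $\roo$; from this one extracts a uniform bound $M$ on the number of children: for each $(k,i)$ the set $\{j\in N_{k+1}:(k+1,j)\prec(k,i)\}$ has at most $M$ elements, because these children have centres that are $r^{k+1}$-separated and all lie in $B(x_{k,i},r^k)$, hence in a ball of radius comparable to $r^{k+1}$ up to a bounded power of $2$. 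I would then define $\mu$ on the cubes by the self-similar-type rule: on the top level pick an arbitrary level $k_0$, set $\mu(Q_{k_0,i})$ to be any fixed positive value summing appropriately, and distribute mass from a parent $Q_{k,i}$ to its children by assigning each child an equal share $1/(\#\text{children})$ of the parent's mass. Extending this to a Borel measure via the Carath\'eodory/Kolmogorov consistency construction on the nested family (using completeness of $X$ to ensure the cubes shrink to points and generate the Borel $\sigma$-algebra) gives a well-defined measure with $\mu(Q_{k,i})\in[M^{-1}\mu(Q_{k-1,j}),\mu(Q_{k-1,j})]$ whenever $Q_{k,i}\subset Q_{k-1,j}$.

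The core estimate is then a standard comparison between balls and cubes. Given $x\in X$ and $\roo>0$, choose $k$ with $r^{k}\le \roo< r^{k-1}$. On one hand, by property~\eqref{iv} (or~\eqref{iii}) there is a cube $Q_{k+1,i}$ with $U(x,cr^{k+1})\subset$ some $Q_{k+1,i}\subset B(x,2\roo)$ up to adjusting constants, so $\mu(B(x,\roo))\ge\mu(Q_{k+n,i})$ for a controlled $n$; on the other hand $B(x,\roo)$ is covered by boundedly many cubes of generation $k-m$ for a fixed $m$ depending only on $c,C,r$ (because each such cube has diameter $\le 2Cr^{k-m}$ and meets $B(x,\roo)$, so its centre lies in a fixed ball, and the centres are $r^{k-m}$-separated, hence there are at most $L$ of them with $L$ depending only on the doubling constant). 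Combining, $\mu(B(x,2\roo))\le L\max_i\mu(Q_{k-m,i})\le L M^{?}\min\mu(Q_{k+n,i})\le L M^{?}\mu(B(x,\roo))$, which is the doubling inequality with $D=LM^{m+n}$. Positivity and finiteness of $\mu(B(x,\roo))$ follow because each ball contains a cube and is contained in finitely many cubes of some fixed generation.

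The main obstacle I anticipate is twofold: first, making the measure genuinely Borel regular and $\sigma$-additive on all Borel sets rather than merely defined on the algebra generated by the cubes --- here one invokes completeness of $X$ together with property~\eqref{iii}, which forces $\diam Q_{k,i}\to 0$ as $k\to\infty$, so the cubes separate points and the Kolmogorov extension (or the mass distribution principle applied in each initial cube) produces a Borel measure; in the unbounded case one works simultaneously over all $k_0\to-\infty$ and checks the resulting measures are consistent, which is exactly where property~\eqref{v} and the nesting~\eqref{ii} make the construction coherent without the extra gluing step of \cite{LuukkainenSaksman1998}. Second, bounding the number of children $M$ and the overlap count $L$ uniformly: both reduce to the statement that a set of $\delta$-separated points inside a ball of radius $A\delta$ has cardinality bounded in terms of $A$ and the doubling constant $N$, which is a routine iteration of the doubling property ($\lceil \log_2(2A)\rceil$ halvings), so I would state it as a short lemma and apply it twice. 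Once these uniform combinatorial bounds are in hand, the doubling estimate above is immediate, and since $X\ne\emptyset$ the measure is not identically zero, so $\mu\in\mathcal{D}(X)$ and $\mathcal{D}(X)\ne\emptyset$.
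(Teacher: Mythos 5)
Your construction of the cubes and the combinatorial bounds ($M$ on the number of children, $L$ on the overlap count, ball-versus-cube comparison) all match the paper, but the heart of your argument fails: distributing the parent's mass \emph{equally} among its children does not produce a doubling measure in general. The step $\mu(Q_{k-m,j})\le M^{?}\mu(Q_{k+n,i})$ for two cubes meeting a common ball implicitly assumes that the exponent can be bounded by the generation gap $m+n$, but the real difficulty is that two cubes of the \emph{same} generation lying within distance $\approx r^k$ of each other may have their most recent common ancestor arbitrarily many generations up (exactly as the dyadic intervals just left and right of $\tfrac12$ in $[0,1]$), and along those two ancestor chains the number of children can differ systematically. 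Concretely, take $X\subset\R^2$ to be a closed half-plane with a ray attached at the origin: near the origin, cubes on the ray have $\approx 1/r$ children while cubes in the half-plane have $\approx 1/r^2$, so under equal splitting the ratio of the masses of the two level-$k$ cubes touching the origin from the two sides grows like $r^{-(k-m)}$ and is unbounded. This is precisely the obstruction that forces the non-uniform weighting \eqref{nudef} in the paper: every non-central child receives the same small weight $p$, and the central child (the one inheriting the parent's centre $x_{k,i}$) receives the remainder $1-M_{k,i}p$. The geometric observation \eqref{keskipakko} then shows that for two nearby cubes of generation $k$ the ancestors between the common ancestor and generation $k-4$ are all non-central, so the factors $p$ cancel in the ratio and one gets the uniform bound \eqref{finalclaim} with $\tilde C=p^{-4}$; no such cancellation is available with equal shares.

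A secondary, repairable issue: you extend the set function to a Borel measure directly on the nested family, but the cubes $Q_{k,i}$ are neither open nor closed, and it is not a priori clear that the resulting outer measure assigns each cube its intended value (mass could leak through the boundaries); this is exactly the point of Remark \ref{rem}(1). The paper sidesteps it by building the measure on the code space $\Sigma$, where cylinders are open and compact, and then showing $\mu(\partial Q_{k,i})=0$ via porosity of $\pi^{-1}(\partial Q_{k,i})$ and the doubling property of $\nu$ on $\Sigma$. Your completeness/shrinking-diameter remarks address the projection being well defined, but not this boundary-mass problem, so even with a correct weighting your extension step would need the code-space (or an equivalent) argument.
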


\begin{proof}
  Fix $0<r\le\tfrac17$ and let $\QQ = \{ Q_{k,i} : k \in \Z,\; i \in N_k
  \}$, $\{ x_{k,i} : k \in \Z,\; i \in N_k \}$, and the constants $0<c<C<\infty$ be as in Theorem
  \ref{lemma:dyadic}. Let
\begin{equation}\label{eq:Mki}
M_{k,i} = \#\{ j \in N_{k+1} : Q_{k+1,j}
  \subset Q_{k,i} \}-1.
\end{equation}
Since $X$ is doubling, it follows using Theorem
  \ref{lemma:dyadic} \eqref{ii} and \eqref{iii} that there is $M\in\mathbb{N}$ such
  that $M_{k,i} \le M$ for every $k \in \Z$ and $i \in N_k$.
We define a code tree $\Sigma$ by setting $\Sigma = \{ (i_k)_{k
    \in \Z} : Q_{k,i_k} \subset Q_{k-1,i_{k-1}} \text{ for all } k \in
  \Z \}$ and equip this with the usual ultrametric: the distance between two
  different codes $(i_k)$ and $(j_k)$ is $2^{-n}$, where $n$ is the first index at which
  the codes differ, $n=\min\{k\,:\,i_k\neq j_k\}$. Theorem
  \ref{lemma:dyadic} \eqref{ii} and \eqref{iv} guarantee that this metric
  is well defined. We also define cylinders $[k,i] = \{ (j_n)_{n \in \Z}\in\Sigma : j_k = i
  \}$ and set $\Sigma_* = \{ [k,i] : k \in \Z \text{ and
  } i \in N_k \}$.
Since $X$ is complete, we may define a
  \emph{projection} $\pi \colon \Sigma \to X$ by the relation $\{
  \pi\bigl( (i_k)_{k \in \Z} \bigr) \} = \bigcap_{k \in \Z}
  \overline{Q_{k,i_k}}$. Now we clearly have $\pi([k,i]) =
  \overline{Q_{k,i}}$ for every $k \in \Z$ and $i \in N_k$.

  We define a set function $\nu \colon \Sigma_* \to [0,\infty)$ by
  first choosing $0<p<1/(M+1)$, $i_0\in N_0$ and setting $\nu([0,i_0]) =
  1$ and then requiring that for every $k \in \mathbb{Z}$ and $i \in
  N_k$ we have
  \begin{equation}\label{nudef}
    \nu([k+1,i]) =
\begin{cases}
       p\nu([k,j]), &\text{if } Q_{k+1,i} \subset Q_{k,j} \text{ and } x_{k+1,i} \ne x_{k,j}, \\
       (1-M_{k,i}p)\nu([k,j]), &\text{if } Q_{k+1,i} \subset Q_{k,j} \text{ and } x_{k+1,i} = x_{k,j}.
     \end{cases}
  \end{equation}
We may now easily extend $\nu$ to a measure on $\Sigma$ by setting
\[\nu(A)=\inf\Bigl\{\sum_{j}\nu([k_j,i_j])\,:\,A\subset \bigcup_{j}[k_j,i_j]\Bigr\}.\]
for all $A\subset X$. The main
reason for this to work is the fact that the cylinders $[k,j]$ are both open and closed
(compact) in $\Sigma$. See also \cite[\S
10]{Halmos1950}. It follows immediately from the construction that
$\nu$ is a doubling measure on $\Sigma$.

  Let $\mu = \pi\nu$ be the projected measure on $X$ given by
  $\mu(A)=\nu(\pi^{-1}(A))$ for all $A\subset X$. It is then clear that
  we have the estimates
\begin{equation}\label{reunaestimaatti}
\mu\bigl( \inter(Q_{k,i}) \bigr) \le \nu([k,i]) \le
  \mu(\overline{Q_{k,i}})
\end{equation}
for every $k \in \Z$ and $i \in N_k$.
Let
  us next show that this can be sharpened to
\begin{equation}\label{nollareuna}
\mu(Q_{k,i}) = \nu([k,i]).
\end{equation}
Fix $k \in \Z$, $i
  \in N_k$ and $(j_n)_{n \in \Z} \in \pi^{-1}(\partial
  Q_{k,i})$. Then $x := \pi\bigl( (j_n)_{n \in \Z} \bigr) \in
  \overline{Q_{n,j_n}}$ for every $n \in \Z$. Theorem
  \ref{lemma:dyadic}(iii) together with the fact $r\le1/7$ implies that
\[\overline{Q_{n+1,l_{n+1}}}\subset B(x_{n,j_n},Cr^{n+1})
  \subset U(x_{n,j_n},c r^n) \subset Q_{n,j_n}\]
 for every $n \in \Z$,
  where $l_{n+1} \in N_{n+1}$ satisfies $x_{n+1,l_{n+1}} =
  x_{n,j_n}$. Hence $Q_{n+1,l_{n+1}}\cap\partial Q_{k,i}=\emptyset$ and
    so $[n+1,l_{n+1}] \subset [n,j_n] \setminus
  \pi^{-1}(\partial Q_{k,i})$ for every  $n\in\Z$.
This means that $\pi^{-1}(\partial Q_{k,i})$ is a porous subset of
$\Sigma$ and since $\nu$ is doubling, it
  now follows that $\mu(\partial Q_{k,i})=\nu\bigl( \pi^{-1}(\partial Q_{k,i}) \bigr)
  = 0$ (See e.g. \cite[Proposition
  3.4]{{JarvenpaaJarvenpaaKaenmakiRajalaRogovinSuomala2007}} for a
  proof of this elementary fact). Combining this with
  \eqref{reunaestimaatti} implies \eqref{nollareuna}.

  It remains to show that $\mu$ is doubling. If $y \in X$ and $t>0$,
  let $k\in\mathbb{Z}$ be such that $3r^k \le t<3r^{k-1}$. By Theorem
  \ref{lemma:dyadic}(iii), the ball $B(y,t)$ contains $Q_{k,i}$ for
  some $i\in N_k$. On the other hand, the ball $B(y,2t)$ intersects
  $Q_{k,j}$ for at most $\tilde{M}$ indices $j\in N_k$, where
  $\tilde{M}<\infty$ depends only on $r$ and the doubling constant $N$
  of $X$. Thus it suffices to show that there exists a constant $1 \le \tilde{C} < \infty$ so that
  \begin{equation}\label{finalclaim}
    \mu(Q_{k,j}) \le \tilde{C}\mu(Q_{k,i})
  \end{equation}
  whenever $Q_{k,j}\cap B(y,2t)\neq\emptyset$. Fix $j \in N_k$ for
  which $Q_{k,j}\cap B(y,2t)\neq\emptyset$. We may assume that $i\neq
  j$ as otherwise \eqref{finalclaim} holds trivially. Observe that
  \begin{equation}\label{dest}
    d(x_{k,i},x_{k,j})\leq 3t+C r^k< r^{k-3}.
  \end{equation}
  Let $m$ be the largest integer such that $Q_{k,i} \cup Q_{k,j} \subset Q_{m,l}$ for some $l\in N_m$.
  For each $m\le n\le k$ let $i_{n},j_n \in N_n$ be the indices that
  satisfy $Q_{k,i} \subset Q_{n,i_n}$ and $Q_{k,j} \subset
  Q_{n,j_n}$.
  If $m<n\le k-4$, it follows that
  \begin{equation}\label{keskipakko}
    x_{n,j_n}\neq x_{n+1,j_{n+1}}\text{ and }
    x_{n,i_{n}}\neq x_{n+1,i_{n+1}}
  \end{equation}
  as otherwise Theorem \ref{lemma:dyadic} implies (recall $r\le\tfrac17$)
\[d(x_{k,i},x_{k,j})>cr^n-Cr^{n+1}=\left(\tfrac{1}{2}-\tfrac{2r}{1-r}\right)r^n\ge
r^{n+1}\ge r^{k-3}\]
contrary to \eqref{dest}.
Now \eqref{nollareuna}, \eqref{keskipakko}, and \eqref{nudef} imply
that $\mu(Q_{n,j_n}) = p\mu(Q_{n+1,j_{n+1}})$ and $\mu(Q_{n,i_n}) = p\mu(Q_{n+1,i_{n+1}})$ and thus
\[\frac{\mu(Q_{n+1,j_{n+1}})}{\mu(Q_{n,j_{n}})} \frac{\mu(Q_{n,i_{n}})}{\mu(Q_{n+1,i_{n+1}})}=1\]
for $m<n\le k-4$. Hence
  \begin{align*}
    \frac{\mu(Q_{k,j})}{\mu(Q_{k,i})} = \prod_{n=m}^{k-1} \frac{\mu(Q_{n+1,j_{n+1}})}{\mu(Q_{n,j_{n}})} \frac{\mu(Q_{n,i_{n}})}{\mu(Q_{n+1,i_{n+1}})} \le p^{-4}
  \end{align*}
  giving \eqref{finalclaim} and finishing the proof.
\end{proof}

\section{Dimension of doubling measures}\label{dim}

The local $L^q$-spectrum was recently introduced in \cite{KaenmakiRajalaSuomala2010}. It gives one way to quantify the local homogeneity of a given measure and can also be used to estimate the local dimensions.

Suppose that $\mu$ is a measure on a doubling metric space $X$ so that bounded sets have finite measure. Denote the support of $\mu$ by $\spt(\mu)$. If $0<r<1$, $x \in \spt(\mu)$, and $q \ge 0$, then, following \cite[Theorem 4.4]{KaenmakiRajalaSuomala2010}, we define the \emph{local $L^q$-spectrum of $\mu$ at $x$} by setting
\begin{equation*}
  \tau_q(\mu,x) = \lim_{t \downarrow 0} \liminf_{k \to \infty} \frac{\log\sum_{Q \in \mathcal{Q}_k(x,t)} \mu(Q)^q}{k\log r},
\end{equation*}
where $\mathcal{Q}_k(x,t) = \{Q_{k,i}\cap B(x,t) : i\in N_k\}$ and the collection $\{ Q_{k,i} : k \in \mathbb{Z},\; i \in N_k \}$ is as in Theorem \ref{lemma:dyadic}.

Let
\begin{equation*}
  \udimloc(\mu,x) = \limsup_{t \downarrow 0} \frac{\log\mu\bigl( B(x,t) \bigr)}{\log t}
\end{equation*}
be the \emph{upper local dimension of $\mu$ at $x$}. If instead of $\limsup_{t \downarrow 0}$ we take $\liminf_{t \downarrow 0}$, we get the \emph{lower local dimension of $\mu$ at $x$}, denoted by $\ldimloc(\mu,x)$ (see \cite[\S 10.1]{Falconer1997}). According to \cite[Theorem 4.2]{KaenmakiRajalaSuomala2010}, we have
\begin{equation} \label{eq:Lq}
  \lim_{q \downarrow 1} \frac{\tau_q(\mu,x)}{q-1} \le \ldimloc(\mu,x) \le \udimloc(\mu,x) \le \lim_{q \uparrow 1} \frac{\tau_q(\mu,x)}{q-1}
\end{equation}
for $\mu$-almost all $x \in X$. This estimate generalises the results \cite[Theorem 1.1]{Ngai1997}, \cite[Theorems 1.3 and 4.1]{Heurteaux1998}, and \cite[Theorem 1.4]{FanLauRao2002}.

We now apply \eqref{eq:Lq} for the measures constructed in the proof of Theorem \ref{thm:main} to estimate their packing dimension.

\begin{theorem} \label{thm:wu}
  If $X \ne \emptyset$ is a complete doubling metric space, then for every $\eps>0$ there is a doubling measure $\mu$ on $X$ such that
  \[
    \udimloc(\mu,x) \le \eps
  \]
  for $\mu$-almost all $x \in X$.
\end{theorem}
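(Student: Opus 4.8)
The plan is to revisit the measure $\mu$ constructed in the proof of Theorem~\ref{thm:main} and observe that its key structural feature---at each branching step from level $k$ to level $k+1$ inside $Q_{k,j}$, exactly one ``child'' $Q_{k+1,i}$ (the one with $x_{k+1,i}=x_{k,j}$) carries mass $(1-M_{k,i}p)\,\nu([k,j])$ while each of the remaining at most $M$ children carries mass $p\,\nu([k,j])$---forces the $\mu$-measure of a generic cylinder to decay like $p^n$ along a chain of $n$ genuine branchings. By choosing $p$ small (hence $M+1$ possible children but geometric decay rate $p$) the measure concentrates on a very thin set, and the packing (upper local) dimension bound follows from feeding this into the $L^q$-spectrum estimate \eqref{eq:Lq}.

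Concretely, first I would fix $0<r\le\tfrac17$, take the cubes and the measure $\mu=\pi\nu$ from Theorem~\ref{thm:main} with a parameter $p\in(0,1/(M+1))$ still to be chosen, and estimate the local $L^q$-spectrum $\tau_q(\mu,x)$ from below for a fixed $q\in(0,1)$. The collection $\mathcal{Q}_k(x,t)=\{Q_{k,i}\cap B(x,t):i\in N_k\}$ has at most $\tilde M$ nonempty members (the same $\tilde M$ from the doubling argument in Theorem~\ref{thm:main}), so $\sum_{Q\in\mathcal{Q}_k(x,t)}\mu(Q)^q\le \tilde M\,\max_i\mu(Q_{k,i}\cap B(x,t))^q\le\tilde M\,(\max_i\mu(Q_{k,i}))^q$ over the relevant $i$. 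Now \eqref{nudef} together with \eqref{nollareuna} gives $\mu(Q_{k,i})=\nu([k,i])\le \max\{p,\,1-p\}^{\,\text{(number of genuine branchings above level }k)}$; since along any chain from level $0$ to level $k$ there are at least... here one must be slightly careful: if a chain takes the ``center'' child $x_{k+1,i}=x_{k,j}$ every time, the mass stays close to $1$, so a crude bound is not enough. To get real decay one instead uses that $\nu([k,i])\le p^{b_k(i)}$ where $b_k(i)$ is the number of indices $n\in\{0,\dots,k-1\}$ at which the chain above $Q_{k,i}$ makes a non-center branching; the point is that the $\mu$-measure of the set of codes making fewer than $\delta k$ non-center branchings is itself exponentially small, by a direct counting/large-deviation estimate for the i.i.d.-like branching structure of $\nu$ on $\Sigma$.

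So the key step is: show that for every $\delta>0$ there is $p_0=p_0(\delta)$ such that for $p<p_0$, $\mu$-a.e.\ $x$ has $\udimloc(\mu,x)\le\delta$. I would do this by writing, for $\mu$-a.e.\ $x=\pi((i_k))$, $\mu(Q_{k,i_k})\approx \prod_{n=0}^{k-1}w_n$ where each factor $w_n$ is $p$ (with ``probability'' proportional to $M_{n}p$ among the children weights, i.e.\ total weight $M_np\le Mp$) or $1-M_np$ (total weight $1-M_np\ge 1-Mp$). By the Borel--Cantelli / strong law applied to the factors (they are independent under $\nu$ once we think of $\nu$ as a product-type measure on $\Sigma$), for $\nu$-a.e.\ code the proportion of indices $n<k$ with $w_n=p$ is, for large $k$, at most $2Mp$; hence $\mu(Q_{k,i_k})\ge p^{2Mp\,k}$ eventually, i.e.\ $\log\mu(Q_{k,i_k})\ge 2Mp\,k\log p$. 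Combined with $Q_{k,i_k}\ni x$ and $\diam Q_{k,i_k}\le 2Cr^k$, and with the lower bound $U(x_{k,i_k},cr^k)\subset Q_{k,i_k}$ controlling $B(x,t)$ from inside at comparable scales, this yields $\udimloc(\mu,x)\le \frac{2Mp\log p}{\log r}$ for $\mu$-a.e.\ $x$; equivalently one passes through $\lim_{q\uparrow1}\tau_q(\mu,x)/(q-1)$ using \eqref{eq:Lq}. Finally, given $\eps>0$, choose $p$ small enough that $\frac{2Mp\log p}{\log r}\le\eps$ (possible since $p\log p\to0$ as $p\downarrow0$ while $M$ and $r$ are fixed once the space and the cube structure are fixed), and the resulting $\mu$ is the required doubling measure.

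The main obstacle I anticipate is making the ``independence'' and counting argument clean: $M_{k,i}$ varies with $k$ and $i$, so the branching on $\Sigma$ is not literally a Bernoulli product, and one must phrase the estimate uniformly---e.g.\ dominate the number of non-center branchings stochastically by a genuine Binomial$(k,Mp)$ variable, or argue directly that among the at most $(M+1)$ children of any cube the combined $\nu$-mass of non-center children is at most $Mp$ times the parent, then iterate and sum over the at most $\binom{k}{j}M^j$ chains with exactly $j$ non-center steps to bound $\nu\{b_k\ge \delta k\}$ by $\sum_{j\ge\delta k}\binom{k}{j}(Mp)^j$, which is $<2^{-k}$ once $p$ is small relative to $\delta$. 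The rest---extracting the dimension bound from the cube masses, and the translation to $B(x,t)$ via property \eqref{iii} of Theorem~\ref{lemma:dyadic}, and the appeal to \eqref{eq:Lq}---is routine.
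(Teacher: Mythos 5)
Your argument reaches the theorem by a genuinely different route than the paper. The paper stays with moments: summing \eqref{nudef} over the children of a cube gives $\sum_{Q_{k+1,j}\subset Q_{k,i}}\mu(Q_{k+1,j})^q\le\mu(Q_{k,i})^q\bigl(Mp^q+(1-Mp)^q\bigr)$ for $0<q<1$, iterating this yields $\tau_q(\mu,x)\ge\log\bigl(Mp^q+(1-Mp)^q\bigr)/\log r$ for every $x$, and the quoted inequality \eqref{eq:Lq} converts it into $\udimloc(\mu,x)\le\bigl(Mp\log p+(1-Mp)\log(1-Mp)\bigr)/\log r$ for $\mu$-almost every $x$, a bound that tends to $0$ as $p\downarrow 0$. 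You instead do the large-deviation step by hand on the code space: the chain rule for \eqref{nudef} gives $\nu(\text{non-central branching at every level of }S)\le (Mp)^{|S|}$ for any finite set $S$ of levels (which is exactly the stochastic domination by a Binomial$(k,Mp)$ variable you ask for, the variability of $M_{k,i}$ notwithstanding), Borel--Cantelli within each root cylinder then gives $\nu([k,i_k])\ge p^{\delta k}(1-Mp)^k$ eventually for $\nu$-a.e.\ code with $\delta$ of order $Mp$, and since $\mu(Q_{k,i_k})=\nu([k,i_k])$ by \eqref{nollareuna} while $x\in\overline{Q_{k,i_k}}$ forces $Q_{k,i_k}\subset B(x,2Cr^k)$, the a.e.\ bound $\udimloc(\mu,x)\le\bigl(\delta\log p+\log(1-Mp)\bigr)/\log r$ follows directly. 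This is more self-contained: you never need \eqref{eq:Lq} (the paper's appeal to it essentially outsources the same Borel--Cantelli argument to the companion paper), at the cost of handling the non-i.i.d.\ branching and the countable union over root cylinders yourself. The doubling property of $\mu$ for each fixed $p$ is already supplied by Theorem~\ref{thm:main}, as you say.

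Two slips should be cleaned up, though neither is fatal. First, your opening bound $\sum_{Q\in\mathcal{Q}_k(x,t)}\mu(Q)^q\le\tilde M\max_i\mu(Q_{k,i}\cap B(x,t))^q$ is false: for fixed $t$ the number of level-$k$ cubes meeting $B(x,t)$ is unbounded as $k\to\infty$, since $\tilde M$ only controls cubes at the scale comparable to the radius of the ball; you abandon this route anyway, so nothing is lost. Second, the quantifiers in your closing remark are in the wrong order: you cannot fix $\delta$ and then take $p$ small, because the resulting dimension bound $\delta\log p/\log r$ blows up as $p\downarrow 0$. The threshold must scale with $p$, as in your main computation --- take $\delta=2Mp$ with a genuine Chernoff-type tail estimate, or $\delta=4Mp$ if you prefer the crude bound $\sum_{j\ge\delta k}\binom{k}{j}(Mp)^j\le\sum_{j\ge\delta k}(eMp/\delta)^j$, which is then geometrically summable in $k$. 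With that choice $\delta\log p/\log r$ is of order $p\log p/\log r\to 0$, and your proof goes through.
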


\begin{proof}
  Fix $0<r\le\tfrac17$ and let $\QQ = \{ Q_{k,i} : k \in \Z,\; i \in N_k
  \}$ be as in Theorem \ref{lemma:dyadic}. Let $\mu$ be the measure constructed in the proof of Theorem \ref{thm:main}.
Fix $k\in\mathbb{Z}$ and $i\in N_k$ and let $M_{k,i}$ be as in
\eqref{eq:Mki}. Since $M_{k,i}\le M$ and $p\le\tfrac{1}{M_{k,i}+1}$, we
  get $M_{k,i}\,p^q+(1-M_{k,i}\,p)^q\le Mp^q+(1-Mp)^q$ for all
  $0<q<1$. Recalling \eqref{nudef} and \eqref{finalclaim}, this implies
\begin{align*}
\sum_{Q_{k+1,j}\subset
Q_{k,i}}\mu(Q_{k+1,j})^q&=\mu(Q_{k,i})^q\bigl(M_{k,i}\,p^q+(1-M_{k,i}\,p)^q\bigr)\\
&\le\mu(Q_{k,i})\bigl(M p^q+(1-Mp)^q\bigr).
\end{align*}
Using this estimate recursively leads to $\tau_{q}(\mu,x)\ge \log\bigl(Mp^q+(1-Mp)^q\bigr)/\log r$ for all $x\in
X$ and $0<q<1$. Combining this with \eqref{eq:Lq} gives
\begin{equation}\label{dim}
\udimloc(\mu,x)\le\lim_{q\uparrow
  1}\frac{\tau_q(\mu,x)}{q-1}\le\frac{Mp \log p+(1-Mp)\log(1-Mp)}{\log
  r}
\end{equation}
for $\mu$-almost all $x\in X$.

As the upper bound in \eqref{dim} can be made arbitrarily small by
choosing $p>0$ small enough in \eqref{nudef}, we have shown the claim.
\end{proof}

\section{Further remarks}

\begin{remark} \label{rem}
(1) It is tempting to try to define the measure $\mu$ in the
proof of Theorem \ref{thm:main} directly without going into the code space $\Sigma$. More precisely, first
defining $\tilde{\mu}(Q_{k,i})$ as $\nu([k,i])$ in \eqref{nudef} and
then letting
$\mu(A)=\inf\{\sum_{j}\tilde{\mu}(Q_{k_j,i_j}) : A\subset\bigcup_j Q_{k_j,i_j}\}$
for $A\subset X$. Although it now follows from the proof of
Theorem \ref{thm:main}, it is not a priori clear that
$\mu(Q_{k,i})=\tilde{\mu}(Q_{k,i})$ for all $k$ and $i$. Observe
that in the code space $\Sigma$ this is not a problem since the
cylinders $[k,i]$ are both open and compact.

(2) The authors of the articles \cite{VolbergKonyagin1987,
    LuukkainenSaksman1998} prove not only the existence of
  doubling
  measures but also the existence of $\alpha$-homogeneous measures for each
  $\alpha$ strictly larger than the Assouad dimension of $X$ (see
  e.g. \cite{Heinonen2001} for the definitions). It is an
  easy exercise to check that given such $\alpha$, if $r>0$ is small
  enough and $p=r^{-\beta}$ in the proof of Theorem \ref{thm:main}
  above, where $\beta$ is between $\alpha$ and the Assouad dimension
  of $X$, then the measure $\mu$ will be $\alpha$-homogeneous.

(3) Using precisely the same idea as in the proof of Theorem
  \ref{thm:main}, one can define more general ``self-similar'' type
  doubling measures on $X$. Suppose for instance, that our space $X$ is such that the number
  of descendants of each cube $Q_{k,i}$ is at least $n\in\mathbb{N}$.
  Let $p_1,\ldots,p_n>0$ be positive numbers with
  $\sum_{i=1}^np_i=1$ and fix $0<p<M^{-2}$. Instead of \eqref{nudef}, we distribute the measure of $Q_{k,i}$
  among the descendants in the following way: For each $1 \le m \le n$ choose $j_m \in N_{k+1}$
  so that $Q_{k+1,j_m}\subset Q_{k,i}$ and $j_m \ne j_l$ when $m \ne l$. Define $\mu(Q_{k+2,j})=p\mu(Q_{k,i})$
  if $Q_{k+2,j}\subset Q_{k,i}$ and
  $x_{k+2,j}\notin\{x_{k+1,j_1},\ldots x_{k+1,j_n}\}$. Then divide the
  rest of the measure of $\mu(Q_{k,i})$ among the ``central subcubes''
  of $Q_{k,j_1},\ldots Q_{k,j_n}$ according to
  the probabilities $p_1,\ldots,p_n$.
\end{remark}

\begin{ack}
  We thank the referee for useful comments.
\end{ack}


\def\cprime{$'$} \def\cprime{$'$} \def\cprime{$'$}

\end{document}